\newcommand{\eat}[1]{}
\newcommand{\Rmnum}[1]{\MakeUppercase{\romannumeral #1}}
\newtheorem{theorem}{Theorem}
\newtheorem{lemma}{Lemma}
\newtheorem{definition}{Definition}
\newtheorem{remark}{Remark}
\newtheorem{proposition}{Proposition}
\newtheorem{assumption}{Assumption}
\newtheorem{problem}{Problem}
\title{
\textbf{An Optimal Coordination Framework for Connected and Automated Vehicles in two Interconnected Intersections}
}
\author{Behdad Chalaki, {\itshape{Student Member, IEEE}}, Andreas A. Malikopoulos, {\itshape{Senior Member, IEEE}}
\thanks{This research was supported in part by ARPAE's NEXTCAR program under the
award number DE-AR0000796 and by the  Delaware Energy Institute (DEI).}%
\thanks{Behdad Chalaki and Andreas A. Malikopoulos are with the Department of Mechanical Engineering, University of Delaware, Newark, DE 19716 USA (email:  \texttt{bchalaki@udel.edu};  \texttt{andreas@udel.edu}.) }%
}
\begin{document}
\maketitle
\thispagestyle{empty}
\pagestyle{empty}
\begin{abstract}
In this paper, we provide a decentralized optimal control framework for coordinating connected and automated vehicles (CAVs) in two interconnected intersections. We formulate a control problem and provide a solution that can be implemented in real time. The solution yields the optimal acceleration/deceleration of each CAV under the safety constraint at "conflict zones," where there is a chance of potential collision. Our objective is to minimize travel time for each CAV. If no such solution exists, then each CAV solves an energy-optimal control problem. 
We evaluate the effectiveness of the efficiency of the proposed framework through simulation.

\end{abstract}

\section{Introduction}

Due to the increasing population and evolution of lifestyle, traffic congestion has become a significant concern in big metropolitan areas. By 2050, it is expected that \(66\%\) of the population will reside in urban areas. By 2030 there will be 41 Mega-cities (with more than 10M people) \cite{Cassandras2017}. Schrank et al. \cite{schrank20152015} reported that in 2014 traffic congestion in urban areas in the US caused drivers to spend 6.9 billion additional hours on the road, burning 3.1 billion extra gallons of fuel.

One of the promising ways to address traffic congestion is to make cities integrated  with information and communication technologies. Using CAVs is one of the intriguing ways towards making smart cities \cite{Klein2016a},\cite{Melo2017a}. As CAVs have gained momentum, different research efforts have been reported in the literature proposing coordination of CAVs. Most studies have focused on traffic bottlenecks such as merging roadways, urban intersections, and speed reduction zones \cite{Lioris2017}. 
More recently, a decentralized optimal control framework was established for coordinating online CAVs in different transportation scenarios, e.g., merging roadways, urban intersections, speed reduction zones, and roundabouts. The analytical solution using a double integrator model, without considering state and control constraints, was presented in \cite{Rios-Torres2015}, \cite{Rios-Torres2}, and \cite{Ntousakis:2016aa} for coordinating online CAVs at highway on-ramps, in \cite{Malikopoulos2018a} at roundabouts, and in \cite{Zhang2016a, Zhang:2017aa} at intersections.
The solution of the unconstrained problem was also validated experimentally at the University of Delaware's Scaled Smart City using 10 CAV robotic cars \cite{Malikopoulos2018b} in a merging roadway scenario and in a corridor \cite{beaver2019demonstration}. The solution of the optimal control problem considering state and control constraints was presented in \cite{Malikopoulos2017} at an urban intersection, without considering rear-end collision avoidance constraint though. The conditions under which the rear-end collision avoidance constraint never becomes active were discussed in \cite{Malikopoulos2018c}. The potential benefits of optimally coordinating CAVs in a corridor was presented in \cite{Zhao2018} while the implications of different penetration rates of CAVs can be found in \cite{Rios2018}. 

Other efforts have used scheduling theory for addressing this problem \cite{DeCampos2015a, Colombo2015,Ahn2014,Ahn2016,DeCampos2015a}. Colombo and Del Vecchio \cite{Colombo2015} proposed a least restrictive supervisor for intersection with collision avoidance constraints. Ahn  \cite{Ahn2014} proposed the design of a supervisory controller by imposing a hard constraint on safety and studied its behavior in the presence of manual-driven cars. In a sequel paper \cite{Ahn2016}, a supervisory controller was designed for an intersection with the ability to override human-driven control input in case of a future collision. Job-shop scheduling was used to solve this problem without considering rear-end collision avoidance constraint. 

In this paper, we propose a decentralized optimal control framework of CAVs for two interconnected intersections  using scheduling theory. We use a drone to act as a \textit{coordinator} that can broadcast information with the CAVs. The proposed approach is different from other approaches  in the literature in two aspects.
\noindent First,  most papers have considered single intersections, with only a few exceptions \cite{Zhang2016a, mahbub2019energy}, where the solution included two isolated coordinators and did not consider right and left turns.  In our approach, we consider only one coordinator (the drone) and include right and left turns. Second, the majority of the papers in this area have used centralized scheduling which has some limitations in high traffic flow. In this paper, we use a decentralized scheduling approach that can make the system more robust in case one agent breaks down.

The remainder of the paper proceeds as follows. 
In Section \Rmnum{2}, we introduce the modeling framework, and the formulation of the minimization time-optimal and scheduling problems. In Section \Rmnum{3}, we present the analytical solution of the optimal control problem. Finally, we provide simulation results in Section \Rmnum{4}, and concluding remarks in Section \Rmnum{5}.

\section{Problem Formulation}

\eat{This section consists of five parts, in the first part we summarize all notations which are being used in this paper, and in the second part, the model and the assumptions are being discussed. Third part formulation for the time-optimal problem will be provided, and the forth part is devoted to scheduling and necessary formulation. The last part is describing equations for solving energy-optimal problem.}

We consider two interconnected intersections shown in Fig. \ref{fig:1}. A drone acting as a coordinator, stores information about the geometric parameters of the intersections, the paths of the CAVs crossing the intersections and schedules of CAVs. The intersection has a \textit{control zone} (Fig. \ref{fig:1}) inside of which the drone can communicate with the CAVs. The drone does not make any decision and only acts as a coordinator between CAVs. The area where lateral collision may occur is defined as a "\textit{merging zone}."

\subsection{Modeling Framework}
Let $N(t)\in\mathbb{N}$ be the number of CAVs inside the control zone at the time $t\in\mathbb{R}^{+}$ and $\mathcal{N}(t)=\{1,\ldots,N(t)\}$ be a queue that designates the order that each CAV exiting the control zone. If two or more CAVs enter the control zone at the same time, the CAV with shorter path receives lower order in the queue; however, if the length of their path is equal, then their order is chose arbitrarily.

We partition the roads around the intersections into 16 zones (Fig. \ref{fig:2}) that belong to the set $\mathcal{M}$. The set $\mathcal{M}$ has two subset, $\mathcal{M}_1$ and $\mathcal{M}_2$. The subset $\mathcal{M}_1$ includes every zone except merging zones, and $\mathcal{M}_2$ includes the merging zones. Let $\mathcal{I}_i$ denotes the path of each CAV. When CAV $i \in\mathcal{N}(t)$ enters the control zone, it creates a tuple of the zones $\mathcal{I}_i:=[m_1,\ldots,m_n]$, $m_n\in\mathcal{M}$, $n\leq 16$, that will cross.

\begin{figure}[b]
\includegraphics[width=8.5cm]{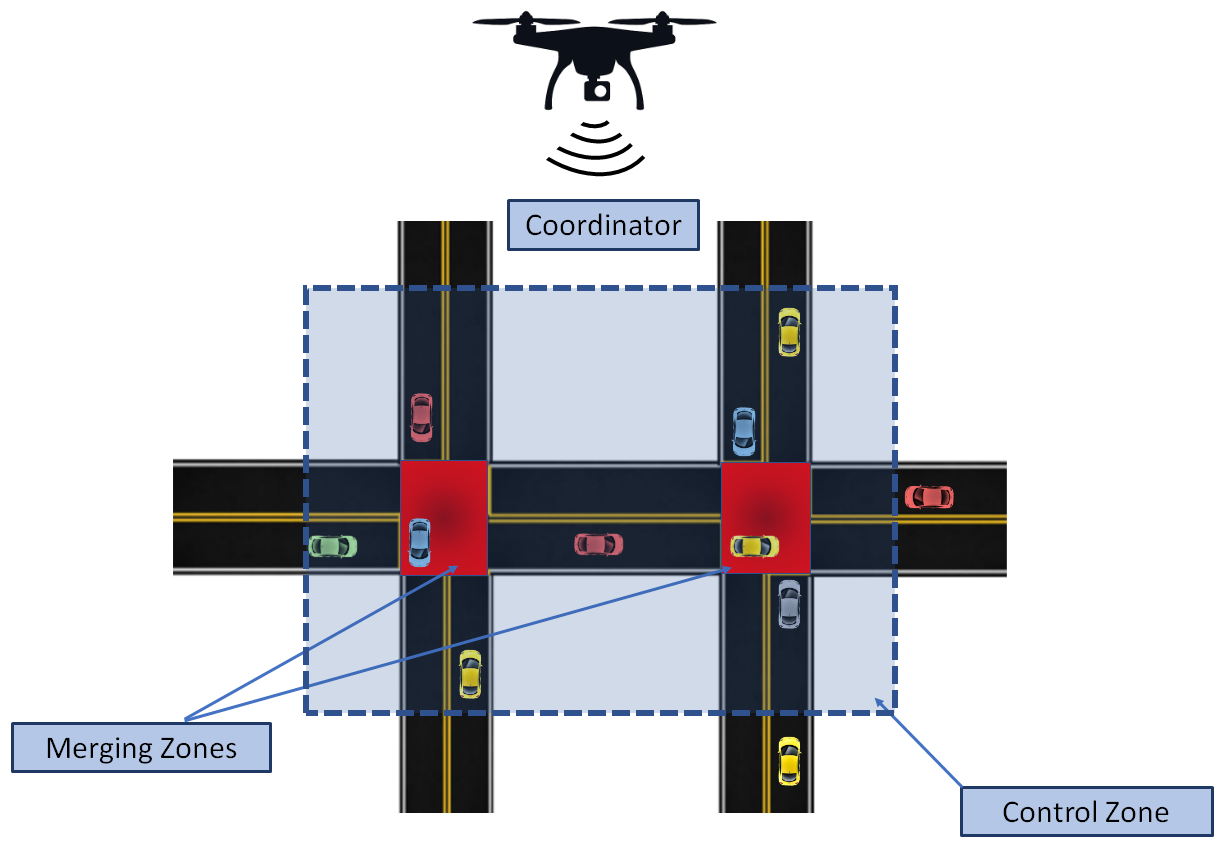}
\caption{Two interconnected intersections with a drone as a coordinator }
\label{fig:1}%
\end{figure}

\begin{definition}
For each CAV $i\in\mathcal{N}(t)$, we define a tuple $\mathcal{C}_{i,j}$ of conflict zones with CAV $j\in\mathcal{N}(t)$, $j<i$ :

\begin{gather}\label{1a}
\mathcal{C}_{i,j}=[m \quad|\quad m\in\:\mathcal{I}_i \: \wedge \:m\in\:\mathcal{I}_j ],
\end{gather}
where the symbol ``$\wedge$'' corresponds to the logical ``AND.''
\end{definition}
In Definition 1, the condition $j<i$ implies that CAV $i$ only considers CAVs that already within the control zone, i.g., their position in the queue is lower than $i$. Therefore, CAVs that are already inside the control zone do not need address potential conflicts with a new CAV that enters the control zone.

\eat{
\begin{equation}\label{1a}
\mathcal{C}_{i,j}=[\textbf{c}_{i,j}^m \quad|\quad m\in\:\mathcal{I}_i \: \wedge \:m\in\:\mathcal{I}_j ],
\end{equation}
with 
\begin{equation}\label{2a}
\textbf{c}_{i,j}^m=\left[b ~ d_s ~ d_e\right]^{T},
\end{equation}
where $b\in\{0,1\}$ indicates the type of the conflict zone and $b=1$ if lateral collision may occur, while $b=0$ if only rear-end collision may occur, $d_s\in \mathbb{R}^+$ is the distance from the entry point of the control zone to the entry point of the conflict zone $\textbf{c}_{i,j}^m$, and $d_e\in \mathbb{R}^+$ is the distance from the entry point of the control zone to the exit point of the conflict zone $\textbf{c}_{i,j}^m$. Finally, the symbol ``$\wedge$'' corresponds to the logical ``AND.''
\end{definition}
In the Definition 1, the condition $j<i$ implies that conflict tuple of CAV $i$ only includes CAVs with lower index than $i$ from the queue $\mathcal{N}(t)$. CAV $i$ does not take any CAV $k>i$ with higher index into account, since CAV $k$ will consider CAV $i$.   
}

\begin{figure}
\includegraphics[width=8.5cm]{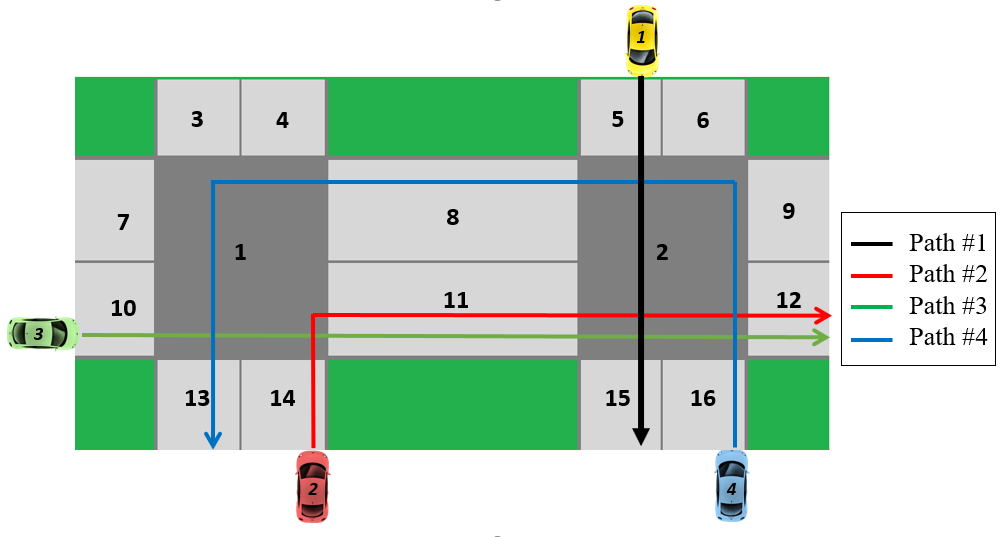}
\caption{Zones numbered topologically and the fixed path for each CAV is shown.}
\label{fig:2}%
\end{figure}
\eat{
For example in Fig. \ref{fig:2} CAVs $\#1,\#2,\#3$ and $\#4$ have the following conflict sets:
\begin{gather}
\mathcal{C}_{3,2}=\{\textbf{c}_{2,1}^1,\textbf{c}_{2,1}^{11},\textbf{c}_{2,1}^2,\textbf{c}_{2,1}^{12}\},
\end{gather}
\begin{gather}
\mathcal{C}_{3,1}=\{\textbf{c}_{3,1}^2, \textbf{c}_{3,1}^1 \},
\end{gather}
\begin{gather}
\mathcal{C}_{3,2}=\{\textbf{c}_{3,2}^2,\textbf{c}_{3,2}^1 \}.
\end{gather}
}

\subsection{Vehicle model and assumptions}
We represent the dynamics of CAV $i\in\mathcal{N}(t)$ with a state equation,
\begin{equation}
\dot{x}_i(t)=f(t,x_{i}(t),u_{i}(t)),\qquad x_{i}(t_{i}^{0})=x_{i}^{0},\label{eq:model}\\
\end{equation}
where $t\in\mathbb{R}^{+}$, and $x_{i}(t)=\left[p_{i}(t) , v_{i}(t)\right]  ^{T}$, $u_{i}(t)$ are the state and control input of CAV $i$ at time $t$.
Let $t_{i}^{0}$ be the time that CAV $i\in\mathcal{N}(t)$
enters the control zone and $x_{i}^{0}=\left[p_{i}^{0} , v_{i}^{0} \right] ^{T}$ be the state at this time. Let $t_{i}^{f}$ be the time that CAV $i\in\mathcal{N}(t)$ exits the control zone.

We model each CAV  $i\in\mathcal{N}(t)$ as a double integrator 
\begin{gather}\label{27a}
\dot{p}_i=v_i(t),\\
\dot{v}_i=u_i(t),\nonumber
\end{gather}
where $p_{i}(t)\in\mathcal{P}_{i}$, $v_{i}(t)\in\mathcal{V}_{i}$, and
$u_{i}(t)\in\mathcal{U}_{i}$ denote position, speed and acceleration. 
For each CAV $i\in\mathcal{N}(t)$ acceleration and speed is bounded with the following constraints:
\begin{gather}\label{uconstraint}
    u_{i,min}\leq u_i(t)\leq u_{i,max},
\end{gather}
\begin{gather}\label{vconstraint}
    0 < v_{i,min}\leq v_i(t)\leq v_{i,max},
\end{gather}
where $u_{i,min},u_{i,max}$ are the minimum and maximum control inputs for each CAV $i\in\mathcal{N}(t)$ and $v_{min},v_{max}$ are the minimum and maximum speed limit respectively. For simplicity, we do not consider CAV diversity. Thus, in the rest of the paper, we set $u_{i,min}=u_{min}$ and $u_{i,max}=u_{max}.$
The sets $\mathcal{P}_{i}$,
$\mathcal{V}_{i}$ and $\mathcal{U}_{i}$, $i\in\mathcal{N}(t),$
are complete and totally bounded subsets of $\mathbb{R}$.

\begin{assumption}
The path of each CAV is predefined and all CAVs in the control zone have access to this information through the drone.
\end{assumption}
\begin{assumption}
CAVs travel inside the merging zones with a constant speed which is known a priori.
\end{assumption}

\begin{assumption}
There is no error or delay in any communication between CAV to CAV, and CAV to drone.
\end{assumption}
\eat{
\begin{assumption}
For each CAV $i\in \mathcal{N}(t)$, the position $p_i(t)$ is measured as a distance relative to the CAV's position at the entrance to the control zone.
\end{assumption}}

The first assumption ensures that when a new CAV enters the control zone, it has access to the memory of the drone and discerns the path of CAVs that are inside the control zone.  The second assumption is to enhance safety awareness inside the merging zones. Assumption 2 aim each CAV to solve a scheduling problem upon arriving the control zone. The third assumption may be strong but it is relatively straightforward to relax it, as long as the noise or delays are bounded. 
    
    \eat{
    The fourth assumption helps us address the problem in one dimension, which is much easier.}
    
Next, we describe three optimization problems: (1) a time-minimization problem, (2) a scheduling problem, and (3) an energy-minimization problem. 
Every time a CAV $i$ enters the control zone, it solves a time minimization problem and a scheduling problem. With time-minimization problem, each CAV seeks to minimize travel time in each zone. Then, the results of the time-minimization problem are used to solve the scheduling problem for each zone along their path.  Since the scheduling problem considers safety, a solution may not exists. In this case, the CAV solves an energy-minimization problem to derive its optimal acceleration/deceleration from the time it enters until the time it exits the control zone.
\subsection{Time Optimization Problem}
We seek to develop a framework that minimizes the travel time of CAVs in the interconnected intersections using scheduling and optimal control theory. 
We formulate the following minimization problem:
\begin{problem}\label{problem1}
\begin{equation}\label{2aaq}
\begin{array}{ll}
\min\limits_{\textit{u}_i\in\mathcal{U}_i} \quad J_1(p_i(.),v_i(.),u_i(.),t)=t^{e,m}_i-t^{s,m}_i, \\
\emph{subject to:}\quad(\ref{27a}), (\ref{uconstraint}), (\ref{vconstraint}),\\
\\
\emph{and given }\quad p^{s,m}_i, v^{s,m}_i, p^{e,m}_i, v^{e,m}_i. \\
\end{array}  
\end{equation}
\end{problem}{}
In Problem \ref{problem1}, $[ p^{s,m}_i,v^{s,m}_i]^T$ and $[p^{e,m}_i,v^{e,m}_i]^T$ are the initial and final states of CAV $i\in\mathcal{N}(t)$ at zone $m$, where $m\in\mathcal{I}_i$ $\wedge$ $m\in\mathcal{M}_1$. The time that CAV $i\in\mathcal{N}(t)$ exits the zone $m\in\mathcal{I}_i$ is denoted by $t^{e,m}_i$.
For zone $m$, $m\in\mathcal{I}_i$ $\wedge$ $m\in\mathcal{M}_1$, the solution of Problem 1 yields the minimum time that CAV $i$ can travel through zone $m$.
It has been shown \cite{ross2015primer} that the optimal solution of \eqref{2aaq} is bang-bang control, e.g., full-on forward ($u=u_{max}$) followed by full-on reverse ($u=u_{min}$). For the case that $m\in\mathcal{I}_i$ $\wedge$ $m\in\mathcal{M}_2$, since speed is constant, we do not need to minimize the time. 
\eat{\begin{remark}
Process time for segment 1 and 2 is equal to 
\begin{equation}\label{2wq}
P^{i}_m=\frac{s^i_m}{v_z}\quad m\in \{1,2\}
\end{equation}
$s^i_m$ is the length of the zone $m$ for CAV $i$ and $v_z$ is the constant speed inside the merging zones $m$.
\end{remark}}
\subsection{Scheduling Problem}
Scheduling is a decision-making process which addresses the optimal allocation of resources to tasks over given time periods \cite{pinedo2016scheduling}.
The problem of coordinating CAVs at the intersection is a typical scheduling problem \cite{Colombo,Ahn2014,Ahn2016}. Thus, in what follows we use scheduling theory to find the time that CAV $i\in\mathcal{N}(t)$ has to reach the zone $m\in\mathcal{I}_i$.
Each zone $m\in\mathcal{M}$ represents a "resource," and CAVs crossing this zone are the "jobs" assigned to the resource. 
\begin{definition}
The time that a CAV $i\in\mathcal{N}(t)$  enters each zone $m\in\mathcal{I}_i$ is called "schedule" and is denoted by $T_{i}^m\in\mathbb{R}^+$.
For CAV $i\in\mathcal{N}(t)$ we define a tuple of its schedules as follows: 
\begin{equation}\label{2a1}
\mathcal{T}_{i}=[T_{i}^m~|~m\in\mathcal{I}_i ].
\end{equation}
\end{definition}{}
For each zone $m\in\mathcal{I}_i$, $i\in\mathcal{N}(t),$ the schedule $T_i^m\in\mathbb{R}^+$ is bounded by
\begin{equation}\label{Schedulecons1}
      R_i^m\leq T_i^m\leq D_i^m,\\
\end{equation}
where $R_i^m\in\mathbb{R}^+$ is the earliest feasible time that CAV $i\in\mathcal{N}(t)$ can reach the entry point of zone $m\in\mathcal{I}_i$, while $D_i^m\in\mathbb{R}^+$ is the latest feasible time. Moreover, $R_{i}^m\in\mathbb{R}^+$ and $D_{i}^m\in\mathbb{R}^+$ are called the "\textit{release time}" and the "\textit{deadline}" of the job respectively.

\eat{
For example in Fig. \ref{fig:2} for CAV $\#1$ is defined:

\begin{equation}\label{2a121}
\mathcal{T}^{1}=[{T}_{1}^{14},{T}_{1}^1,{T}_{1}^{11},{T}_{1}^2,{T}_{1}^{12}].
\end{equation}}

\begin{definition}
For each zone $m\in\mathcal{C}_{i,j}$, $i, j\in\mathcal{N}(t)$, $j<i$, the safety constraint can be restated as 
\begin{equation}\label{Schedulecons2}
  |T_i^m-T_j^m|\geq h,
\end{equation}
where $h\in\mathbb{R}^+$ is the safety time headway.
\end{definition}
\begin{problem}\label{problem2}
For each CAV $i\in\mathcal{N}(t)$ and each zone $m\in\mathcal{I}_i$ with safety time headway $h\in\mathbb{R}^+$ the scheduling problem is formulated as follows
\begin{equation}\label{Scheduling}
\begin{array}{ll}

\min\limits_{T_i^m\in\mathbb{R}^+} \quad J_2(T_i^m)=T_i^m,\\
\emph{subject to:}\quad (\ref{Schedulecons1}), (\ref{Schedulecons2}).\\
\end{array}  
\end{equation}
\end{problem}

\begin{definition}\label{processtime}
The shortest feasible time that it takes for CAV $i\in\mathcal{N}(t)$ to travel through the zone $m\in\mathcal{I}_i$ is defined as the process time and is denoted by $P_{i}^m\in\mathbb{R}^+$.
\end{definition}
 The process time is the outcome of Problem \ref{problem1}, hence ~$P_{i}^m=~t_{i}^{e,m}-t_{i}^{s,m}$.

\begin{remark}
In Problem \ref{problem2}, for each CAV $i\in\mathcal{N}(t)$ and $m\in\mathcal{I}_i$, the release time for zone $m+1\in\mathcal{I}_i$ can be computed by the process time and schedule of zone $m\in\mathcal{I}_i$ 
\begin{equation}\label{remarkproblem2}
 R_i^{m+1}=P_i^m+T_i^m,
\end{equation}
where $m+1\in\mathcal{I}_i$ is the next zone that CAV $i\in\mathcal{N}(t)$ will cross after zone $m\in\mathcal{I}_i$.
\end{remark}
\begin{proposition}\label{remark2feas}
Let $T_i^{m+1}$ be the entry time of CAV $i\in\mathcal{N}(t)$ at the zone $m+1\in\mathcal{I}_i$ and $R_i^{m+1}$ be the earliest time that CAV $i$ can reach zone $m+1$. Then the feasible time-optimal solution of CAV $i$ at the zone $m\in\mathcal{I}_i$ exist (it does not violate safety constraint at entry of zone $m+1$), if and only if $T_i^{m+1}=R_i^{m+1}$.

\end{proposition}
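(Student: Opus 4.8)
The plan is to connect the time-optimal traversal of zone $m$ (Problem \ref{problem1}) with the admissibility of the schedule at the adjacent zone $m+1$ (Problem \ref{problem2}), using the identity \eqref{remarkproblem2} as the bridge. If CAV $i$ enters zone $m$ at time $T_i^m$ and crosses it in the minimum process time $P_i^m$ returned by Problem \ref{problem1}, then by \eqref{remarkproblem2} it reaches the entry of zone $m+1$ exactly at $T_i^m+P_i^m=R_i^{m+1}$. First I would record the elementary fact that, under the double-integrator dynamics \eqref{27a} with the prescribed boundary states, the exit time of zone $m$ is strictly increasing in the traversal duration, so that "CAV $i$ executes a feasible time-optimal solution in zone $m$" is equivalent to "CAV $i$ arrives at the entry of zone $m+1$ at the release time $R_i^{m+1}$ and this arrival is collision-free." This reduces the proposition to showing that the earliest arrival $R_i^{m+1}$ meets \eqref{Schedulecons1} and \eqref{Schedulecons2} at the entry of $m+1$ if and only if $T_i^{m+1}=R_i^{m+1}$.

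For the "if" direction I would assume $T_i^{m+1}=R_i^{m+1}$ and note that, since $T_i^{m+1}$ solves Problem \ref{problem2} for zone $m+1$, it satisfies the release/deadline window \eqref{Schedulecons1} and the safety headway \eqref{Schedulecons2}; hence arriving at $R_i^{m+1}$ violates no constraint at the entry of zone $m+1$, so the time-optimal profile in zone $m$ is feasible.

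For the "only if" direction I would assume the time-optimal solution in zone $m$ is feasible. By the equivalence above this means the arrival time $R_i^{m+1}$ itself satisfies \eqref{Schedulecons1} and \eqref{Schedulecons2}, so $R_i^{m+1}$ is a feasible point of Problem \ref{problem2}. Since $J_2(T_i^{m+1})=T_i^{m+1}$ is being minimized over a feasible set on which $T_i^{m+1}\ge R_i^{m+1}$ always holds by \eqref{Schedulecons1}, the minimizer must attain this lower bound, i.e. $T_i^{m+1}=R_i^{m+1}$.

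I expect the one nontrivial point to be the equivalence asserted in the first paragraph: it requires arguing that a feasible but non-minimal traversal of zone $m$ strictly delays the entry to zone $m+1$ past $R_i^{m+1}$, which rests on $P_i^m$ being the minimum-time outcome of the bang-bang solution of Problem \ref{problem1} and on monotonicity of the zone-$m$ exit time in the allotted traversal time. Once that is in hand, the remainder is a direct unwinding of Definition \ref{processtime} together with the bound \eqref{Schedulecons1}.
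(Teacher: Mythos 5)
Your proposal is correct and follows essentially the same route as the paper's own proof: both directions hinge on the identity $R_i^{m+1}=P_i^m+T_i^m$ from \eqref{remarkproblem2} together with the lower bound $T_i^{m+1}\ge R_i^{m+1}$ from \eqref{Schedulecons1} and the fact that Problem~\ref{problem2} minimizes the entry time. Your explicit monotonicity remark (that any non-minimal traversal of zone $m$ strictly delays arrival past $R_i^{m+1}$) is a small tightening of a step the paper leaves implicit, but it does not change the argument.
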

\begin{proof}
First we prove necessary condition. Based on Definition \ref{processtime}, the solution of time-optimal problem yields $P_{i}^m$. Substituting $P_{i}^m$ in (\ref{remarkproblem2}) gives release time for zone $m+1$. If $R_i^{m+1}$ does not violate any safety constraint at entry of zone $m+1$, it is the solution of Problem \ref{problem2} for zone $m+1$ which is $T_i^{m+1}=R_i^{m+1}$.
To prove sufficiency, we know from (\ref{Schedulecons1}), $T_i^{m+1}\geq R_i^{m+1}$. If $T_i^{m+1}=R_i^{m+1}$, this implies that CAV $i$ enters the zone $m+1$ at the earliest feasible time without violating the safety constraint. From (\ref{remarkproblem2}), CAV $i$  travels with the process time $P_i^m$ at the prior zone $m$ with  schedule $T_i^m$, which is the solution of the Problem \ref{problem1} for zone $m$. Hence, a time-optimal solution exists which does not violate safety constraint at entry of zone $m+1$.
\end{proof}

\subsection{Energy Minimization Formulation}
If at zone $m\in\mathcal{I}_i$, CAV $i\in\mathcal{N}(t)$ does not have a feasible time-optimal solution, then we impose $i$ to solve an energy minimization problem. In this problem, the cost function is the $L^2$-norm of the control input. The implications of the solution of the energy-optimal control is the minimization of transient engine operation, which has direct benefits both in fuel consumption and emissions \cite{malikopoulos2010online}. 
\begin{problem}\label{problem3}
The energy minimization problem is formulated as follows.
\begin{equation}\label{2aaq2}
 \begin{array}{ll}
 \min\limits_{\textit{u}_i\in\mathcal{U}_i} \quad J_3(p_i(.),v_i(.),u_i(.),T_i^{m},T^{m^\prime}_i)= {\frac{1}{2}} \int_{T_i^{m}}^{T^{m^\prime}_i} u_{i}(t)^2dt, \\
        
\qquad\quad\emph{subject to:}\quad(\ref{27a}), (\ref{uconstraint}), (\ref{vconstraint}),\\
\\
\emph{and given } p^{s,m}_i, v^{s,m}_i, T_i^{m}, p^{e,m}_i, v^{e,m}_i, T^{m^\prime}_i, \\
           \end{array} 
\end{equation}\end{problem}
\noindent where $[p^{s,m}_i,v^{s,m}_i]^T$ and $[p^{e,m}_i,v^{e,m}_i]^T$ are initial and final states of CAV $i\in\mathcal{N}(t)$ at zone $m\in\mathcal{I}_i$ respectively. The entry and exit time of CAV $i\in\mathcal{N}(t)$ from zone $m\in\mathcal{I}_i$ is denoted by $T_i^{m}$ and $T^{m^\prime}_i$ respectively.
\\
\begin{remark}
The exit time of CAV $i\in\mathcal{N}(t)$ from zone $m\in\mathcal{I}_i$ denoted by $T^{m^\prime}_i $ and is equal to the entry time to the zone ${m+1}\in\mathcal{I}_i$ which is the next zone that CAV will cross after zone $m$. 
\begin{equation}
    T^{m^\prime}_i=T^{m+1}_i
\end{equation}
\end{remark}
\vspace{2mm}
\section{Analytical Solution of the Problems}
In this section we provide the analytical solutions of Problem \ref{problem1} and Problem \ref{problem3}. CAVs solve the scheduling problem with mixed integer linear program. Since the closed form analytical solution of the time-minimization problem is available, the mixed integer linear program can be computed as CAVs enter the control zone.
 For the sake of simplicity in notation, in the following section we use $p^{s}_i,v^{s}_i,p^{e}_i,v^{e}_i$ instead of $p^{s,m}_i,v^{s,m}_i,p^{e,m}_i,v^{e,m}_i.$
\subsection{Solution of the time minimization problem }

The solution of Problem \ref{problem1}, using Hamiltonian analysis, includes solving a system of non-linear, non-smooth equations \cite{ross2015primer}. Pontryagin\cite{pontryagin2018mathematical} solved the problem in a simpler way using a graphical approach. Since the problem's state space is two dimensional, Pontryagin noted that the optimal controls are either $u=u_{max}$, or $u=u_{min}$, which can be characterized as parabolas in two-dimensional state space. 
\eat{
If the problem has a solution, there is an intermediate point that the CAV should travel from either with $u=u_{max}$, or $u=u_{min}$ to reach the final state.
}
Based on the initial state, it should traverse on one of the parabolas to reach the intermediate point then traverse from the intermediate point to reach the target point. The aforementioned intermediate point is also called a switching point, since the control input will be changed at this point. 

\begin{proposition}\label{proposition2}
Let $v^e_i$ be the speed of CAV $i\in\mathcal{N}(t)$ at the end of the zone $m\in\mathcal{I}_i$. Let $u_{max}$ and $u_{min}$ be the maximum acceleration and deceleration respectively. If CAV $i$ cruises from the initial state $x_{i}^{s}=[p^s_i,v^s_i]^T$ to the final position $p^e_i$, then the speed of CAV $i\in\mathcal{N}(t)$ at the end of the zone $m\in\mathcal{I}_i$ should be bounded as follows:
\begin{gather}\label{vforproposition2}
\sqrt[]{2u_{min}\cdot(p^e_i-p^s_i)+{v^s_i}^2}\leq v^e_i \leq \sqrt[]{2u_{max}\cdot(p^e_i-p^s_i)+{v^s_i}^2}
\end{gather}
\end{proposition}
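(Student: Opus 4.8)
The plan is to turn the double-integrator dynamics into an ``energy''-type identity relating the change in kinetic energy to the work done by the bounded control, and then sandwich that work between its extreme values. Let $t_i^s$ and $t_i^e$ denote the times at which CAV $i$ occupies positions $p_i^s$ and $p_i^e$, respectively, and let $u_i(\cdot)$ be any admissible control steering the CAV from $(p_i^s,v_i^s)$ to $(p_i^e,v_i^e)$. From \eqref{27a}, $\dot p_i = v_i$ and $\dot v_i = u_i$, so along this trajectory $\frac{d}{dt}\big(\tfrac12 v_i(t)^2\big) = v_i(t)\,u_i(t)$. Integrating from $t_i^s$ to $t_i^e$ gives
\[
\tfrac12\big(v_i^e\big)^2 - \tfrac12\big(v_i^s\big)^2 \;=\; \int_{t_i^s}^{t_i^e} v_i(t)\,u_i(t)\,dt .
\]

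Next I would invoke the state and input constraints. By \eqref{vconstraint} the speed stays strictly positive, which has two consequences: $p_i$ is strictly increasing, so $p_i^e - p_i^s > 0$ and $\int_{t_i^s}^{t_i^e} v_i(t)\,dt = p_i^e - p_i^s$; and multiplying the control bound \eqref{uconstraint}, $u_{min}\le u_i(t)\le u_{max}$, by the positive factor $v_i(t)$ preserves the inequalities, i.e. $u_{min}\,v_i(t)\le v_i(t)\,u_i(t)\le u_{max}\,v_i(t)$. Integrating this chain over $[t_i^s,t_i^e]$ and substituting the identity above yields
\[
u_{min}\,(p_i^e - p_i^s) \;\le\; \tfrac12\big(v_i^e\big)^2 - \tfrac12\big(v_i^s\big)^2 \;\le\; u_{max}\,(p_i^e - p_i^s).
\]

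Rearranging gives $2u_{min}(p_i^e-p_i^s)+(v_i^s)^2 \le (v_i^e)^2 \le 2u_{max}(p_i^e-p_i^s)+(v_i^s)^2$, and taking square roots (legitimate since $v_i^e>0$ by \eqref{vconstraint} and $\sqrt{\cdot}$ is increasing on the nonnegative reals) produces exactly \eqref{vforproposition2}. If $u_{min}<0$ and the left-hand radicand happens to be negative, the lower bound holds vacuously because $v_i^e>0$; otherwise all radicands are nonnegative and the monotonicity argument applies verbatim.

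This is a short calculus manipulation, so I do not anticipate a genuine obstacle; the only points needing care are keeping the signs straight (positivity of $v_i$ and of $p_i^e-p_i^s$ so that multiplying and integrating the control bound never flips an inequality) and the square-root step at the end. It is also worth remarking that the two endpoints of \eqref{vforproposition2} are attained by the constant controls $u_i\equiv u_{min}$ and $u_i\equiv u_{max}$, so the bound is tight and in fact characterizes precisely the set of speeds reachable at position $p_i^e$ starting from $(p_i^s,v_i^s)$.
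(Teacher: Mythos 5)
Your proof is correct, and it takes a genuinely different (and more complete) route than the paper's. The paper's own proof simply evaluates the two extremal constant controls: it applies the constant-acceleration kinematic identity $({v^e_i})^2-({v^s_i})^2=2u\cdot(p^e_i-p^s_i)$ once with $u=u_{max}$ and once with $u=u_{min}$, obtains the two endpoint speeds, and then asserts that these are the maximum and minimum final speeds. That argument verifies the endpoints of \eqref{vforproposition2} but never actually shows that an \emph{arbitrary} admissible control produces a final speed lying between them. Your work--energy argument closes exactly that gap: the identity $\tfrac{d}{dt}\bigl(\tfrac12 v_i^2\bigr)=v_i u_i$ together with the strict positivity of $v_i$ from \eqref{vconstraint} lets you sandwich $\int v_i u_i\,dt$ between $u_{min}(p^e_i-p^s_i)$ and $u_{max}(p^e_i-p^s_i)$ for every admissible trajectory, which proves the full inequality rather than just computing its extreme cases. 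You also handle the square-root step and the possibly negative lower radicand, which the paper ignores. The only thing the paper's version buys is brevity and an explicit exhibition of the controls that attain the bounds -- a point you recover anyway in your closing remark on tightness.
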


\begin{proof}
If CAV $i\in\mathcal{N}(t)$ accelerates with $u_{max}$, then from (\ref{27a}) 
\begin{gather}
{v^{e}_i}^2-{v^s_i}^2=2u_{max}\cdot(p^e_i-p^s_i),    
\end{gather}
or
\begin{gather}\label{finalvel}
 v^{e}_i=\sqrt[]{2u_{max}\cdot (p^e_i-p^s_i)+{v^s_i}^2}.
\end{gather}
Similarly, if CAV $i\in\mathcal{N}(t)$ decelerates with $u_{min}$, its final speed is
\begin{gather}\label{finalvel2}
 v^{e}_i=\sqrt[]{2u_{min}\cdot(p^e_i-p^s_i)+{v^s_i}^2}.
\end{gather}
The maximum and minimum final speeds are computed in In \eqref{finalvel} and \eqref{finalvel2} respectively. Therefore, the final speed is bounded from (\ref{finalvel}) and (\ref{finalvel2}).
\end{proof}
\eat{
The result of the Proposition \ref{proposition2} is also shown in Fig. \ref{fig:4}
\begin{figure}
\includegraphics[width=8.5cm]{proposition2.png}
\caption{Feasible region (filled with green color) for the velocity of CAV $i\in\mathcal{N}(t)$ at the end of the zone $m\in\mathcal{I}_i$.}
\label{fig:4}%
\end{figure}}
\begin{figure}
	\centering
	\includegraphics[width=6cm]{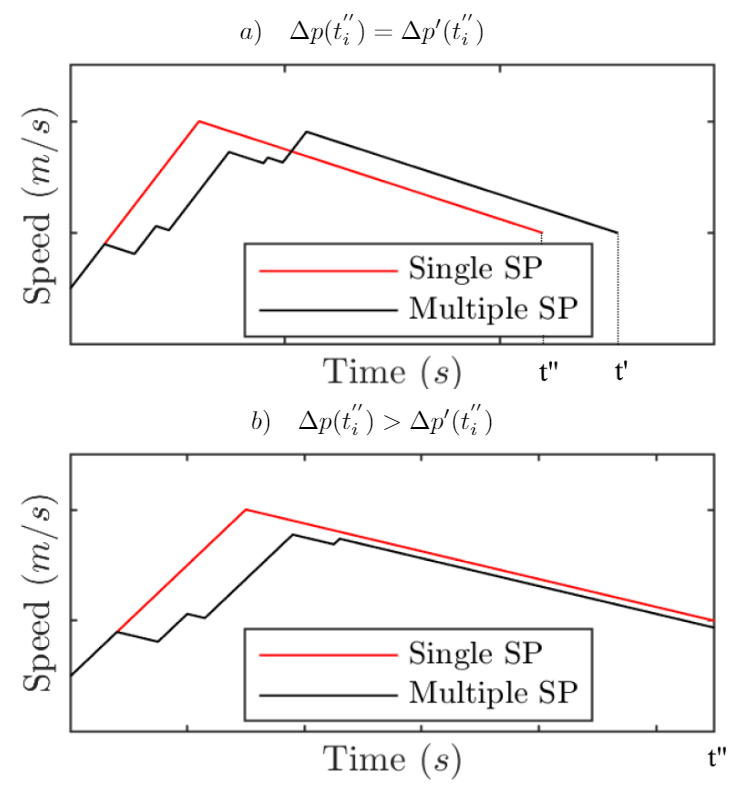}
	\caption{Area under speed-time plot for multiple switching points and a single switching point. }
	\label{extrafig:1}%
\end{figure}
\begin{lemma}\label{Switching point}
If CAV $i\in\mathcal{N}(t)$ has a feasible time-optimal solution at zone $m\in\mathcal{I}_i$ (Proposition \ref{remark2feas}), it has at most one switching point.
\end{lemma}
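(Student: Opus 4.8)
The plan is to argue by contradiction from optimality, using a rearrangement of the bang--bang arcs together with the area interpretation of the speed--time plot in Fig.~\ref{extrafig:1}. (The quickest route would be the Pontryagin maximum principle: for the double integrator \eqref{27a} the switching function is the velocity costate, which is affine in $t$, hence vanishes at most once, so the optimal control switches at most once. I would instead give the elementary argument below, which also exposes how the speed bounds \eqref{vconstraint} enter.)

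First I would record the structure of any optimal control: by the discussion following Problem~\ref{problem1} it is bang--bang, $u_i(t)\in\{u_{min},u_{max}\}$ a.e., so the speed profile $v_i(\cdot)$ on $[t^{s,m}_i,t^{e,m}_i]$ is continuous and piecewise affine with slopes alternating between $u_{max}>0$ and $u_{min}<0$; its switching points are exactly the break points of this graph, and the distance travelled is the area under it, which must equal $p^e_i-p^s_i$. A key bookkeeping identity is that, for a bang--bang control, the total time at $u_{max}$ (call it $\tau_+$) and the total time at $u_{min}$ (call it $\tau_-$) are pinned down by $t^{e,m}_i-t^{s,m}_i=\tau_++\tau_-$ together with $v^e_i-v^s_i=u_{max}\tau_++u_{min}\tau_-$, \emph{independently of the order of the arcs}. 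Hence all bang--bang profiles with the same travel time and the same boundary speeds share the same $(\tau_+,\tau_-)$ and differ only in how their $u_{max}$-- and $u_{min}$--arcs are interleaved.

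Second, suppose toward a contradiction that an optimal profile has two or more break points. Then it contains an adjacent ``deceleration--then--acceleration'' pair; swapping this pair to ``acceleration--then--deceleration'' keeps the endpoints of the pair, $(\tau_+,\tau_-)$, and the total time fixed, yet strictly increases the area under $v_i(\cdot)$ (the replaced ``valley'' lies strictly below the new ``tent''). Iterating these swaps (a bubble sort) turns the profile into the single--switch ``tent'' with the same $(\tau_+,\tau_-)$, whose area $\tilde L$ therefore strictly exceeds $p^e_i-p^s_i$. Since a tent's area is continuous and strictly increasing in its peak speed, and since that peak lies strictly above $\max\{v^s_i,v^e_i\}$ when $\tau_+,\tau_->0$, I can lower the peak to get a shorter tent that covers exactly $p^e_i-p^s_i$ with boundary speeds $v^s_i,v^e_i$ — admissible by $u_i\in[u_{min},u_{max}]$ and, via Proposition~\ref{proposition2} and \eqref{vforproposition2}, within the speed band — in strictly less time than $t^{e,m}_i-t^{s,m}_i$, contradicting optimality. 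The symmetric sub--case, $p^e_i-p^s_i$ below the ``critical'' distance, is handled by sorting toward and then enlarging a single ``valley.'' Either way, at most one switching point remains.

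The hard part will be the careful treatment of the speed constraints \eqref{vconstraint}: if the optimal speed is forced onto $v=v_{max}$ (or $v=v_{min}$) the profile acquires a constant ``cruising'' arc, and one must either (i) show that the hypothesis of a \emph{feasible} time--optimal solution in the sense of Proposition~\ref{remark2feas} precludes such an arc inside zone $m$, or (ii) adopt the convention that the entry/exit of a constraint arc is not a ``switching point'' (the control there passes through $0$, not between $u_{min}$ and $u_{max}$), so that the count ``at most one'' is unaffected. A second, more routine, obstacle is checking that every rearranged or resized trajectory stays admissible — control in $[u_{min},u_{max}]$, speed in $[v_{min},v_{max}]$, and the two endpoint states matched — which is precisely where Proposition~\ref{proposition2} does the work.
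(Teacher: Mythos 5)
Your proposal rests on the same geometric idea as the paper's proof --- the distance travelled is the area under the $v$--$t$ plot, and a single-switch ``tent'' covers more ground than any multi-switch profile with the same boundary data --- but the mechanism you use to establish that dominance is genuinely different and, in fact, more complete. The paper fixes a single-switch profile (duration $t''_i$) and a multi-switch profile (duration $t'_i$), \emph{assumes} in its main case that $\Delta p(t''_i)>\Delta p'(t''_i)$, and then uses monotonicity of $\Delta p'$ to conclude $t'_i>t''_i$; the key inequality is introduced with ``first consider the case that\ldots,'' the equality case is dispatched with ``it can be shown,'' and the case $\Delta p(t''_i)<\Delta p'(t''_i)$ is never ruled out. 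Your exchange argument --- swap an adjacent deceleration--acceleration pair into acceleration--deceleration, note that this preserves the pair's endpoints, the totals $(\tau_+,\tau_-)$, and the overall time while strictly increasing the area, then iterate and shrink the resulting tent to recover the exact distance in strictly less time --- is precisely the missing justification, so your route both subsumes and repairs the paper's case analysis. What the paper's version buys is brevity and a direct identification of $t^\ast_i=t''_i$; what yours buys is an actual proof of the dominance inequality and an explicit reduction to a one-parameter family of tents where the intermediate-value step is transparent. Two caveats you should keep: (i) your ``symmetric sub-case'' (valley profiles, when the required distance is below the critical value) needs to be written out, since after bubble-sorting you cannot always land on a tent; and (ii) the speed-saturation issue you flag for $v_{max}$/$v_{min}$ is real and is silently ignored by the paper as well --- adopting your convention (ii), that entering a constrained cruising arc is not a switch between $u_{min}$ and $u_{max}$, is the cleanest way to keep the statement true as written.
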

\begin{proof}
We have three cases to consider:

Case 1: In \eqref{vforproposition2}, if  $v^{e}_i$ is equal to the lower bound, CAV $i\in\mathcal{N}(t)$ arrives at the end of zone $m\in\mathcal{I}_i$ with maximum deceleration $u_{min}$. In this case, there is no switching point. 

Case 2: In \eqref{vforproposition2}, if  $v^{e}_i$ is equal to the upper bound, CAV $i\in\mathcal{N}(t)$ arrives at the end of zone $m\in\mathcal{I}_i$ with maximum acceleration $u_{max}$. In this case, there is no switching point. 

Case 3: In this case, from \eqref{vforproposition2} the CAV has at least one switching point.
Let $t^{\ast}_i\in\mathbb{R}^+$ be the minimum travel time. Let $t^{\prime}_i\in\mathbb{R}^+$ and $t^{''}_i\in\mathbb{R}^+$ be the time that it takes for CAV $i$ to travel at zone $m$ with more than one switching points, and exactly one switching point respectively. Let $\Delta p^\prime(t)$ and $\Delta p(t)$ denote the distance traveled by CAV $i$ at zone $m$ at time $t$ for more than one switching point and one switching point respectively, which can be represented by the area under $v-t$ diagram. 
First consider the case that the area under $v-t$ diagram for one switching point is greater than multiple switching points in the interval $t\in[0,t^{''}_i]$, then we can write
\begin{gather}\label{Singlepoint1}
\quad  \Delta p(t^{''}_i)>\Delta p^\prime(t^{''}_i).
\end{gather}
The left hand side of (\ref{Singlepoint1}) is equal to $p_i^e-p_i^s$,
\begin{gather}\label{Singlepoint2}
   p_i^e-p_i^s>\Delta p^\prime(t^{''}_i).
\end{gather}
We know that 
\begin{gather}\label{Singlepoint3}
   p_i^e-p_i^s=\Delta p^\prime(t^{\prime}_i).
\end{gather}
Hence from (\ref{Singlepoint3}) and (\ref{Singlepoint2}), we have \begin{gather}\label{Singlepoint4}
 \Delta p^\prime(t^{\prime}_i)>\Delta p^\prime(t^{''}_i).
\end{gather}
Since $\Delta p^\prime(t)$ is the distance travelled by CAV $i$, it is an increasing function. Therefore, from (\ref{Singlepoint4}) 
\begin{gather}\label{Singlepoint5}
t^{\prime}_i>t^{''}_i.
\end{gather}
Hence the minimum travel time is:
\begin{gather}\label{Singlepoint6}
t^{\ast}_i=t^{''}_i.
\end{gather}
For the case $\Delta p(t^{''}_i)=\Delta p^\prime(t^{''}_i)$,  it can be shown that for satisfying the endpoint speed condition, $t^\prime$ has to be greater than $t^{''}$.
\end{proof}
An example of multiple switching point and a single switching point is shown in Fig. \ref{extrafig:1}.

\begin{theorem}\label{lem1}
Let $x_{i}^{s}=[p^s_i,v^s_i]^T$ and $x_{i}^{e}=[p^e_i,v^e_i]^T$ be the initial and final state of CAV $i\in\mathcal{N}(t)$.
If CAV $i$ has a feasible time-optimal solution at zone $m\in\mathcal{I}_i$ (Proposition \ref{remark2feas}), then it has to accelerate with $u=u_{max}$ and then decelerate with $u=u_{min}$.
\end{theorem}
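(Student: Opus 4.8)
The plan is to build on the single-switch structure already proved in Lemma \ref{Switching point} and then rule out the one remaining ``wrong-order'' control profile by an interchange (exchange) argument on the speed--time graph.

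First I would reduce the statement. By Lemma \ref{Switching point}, a feasible time-optimal solution of CAV $i$ at zone $m$ has at most one switching point, so its control is of exactly one of four kinds: constant $u_{max}$; constant $u_{min}$; $u_{max}$ followed by $u_{min}$; or $u_{min}$ followed by $u_{max}$. The first three are all special cases of ``accelerate with $u=u_{max}$, then decelerate with $u=u_{min}$'' in which one phase has zero duration (indeed these are Cases 1 and 2, together with the generic Case 3, in the proof of Lemma \ref{Switching point}). Hence the whole content of Theorem \ref{lem1} is that the fourth kind -- deceleration followed by acceleration -- cannot be time-optimal, and this is what I would prove by contradiction.

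So suppose the optimal trajectory applies $u_{min}$ for a duration $\sigma>0$ and then $u_{max}$ for a duration $\tau>0$, steering $[p^s_i,v^s_i]^T$ to $[p^e_i,v^e_i]^T$ in total time $\sigma+\tau$; discarding the degenerate boundary cases (where $v^e_i$ equals an endpoint of \eqref{vforproposition2} and the claim is already in Lemma \ref{Switching point}), Proposition \ref{proposition2} tells us $v^e_i$ is strictly interior to \eqref{vforproposition2}. Form the \emph{interchanged} profile: apply $u_{max}$ for duration $\tau$, then $u_{min}$ for duration $\sigma$. Its net speed change is $u_{max}\tau+u_{min}\sigma$, the same as the original, so it ends at speed $v^e_i$ after the same total time $\sigma+\tau$. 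Comparing the two piecewise-linear speed--time graphs, both starting at $v^s_i$: on the appropriate initial subinterval the interchanged graph has slope $u_{max}$ where the original has slope $u_{min}$, on a final subinterval of equal length the slopes are swapped, and on the middle subinterval they coincide; hence their difference rises from $0$, stays positive, and returns to $0$, so the interchanged graph lies strictly above the original on the open interval. The area under it -- the distance covered in time $\sigma+\tau$ -- is therefore strictly larger than $p^e_i-p^s_i$. This is the same ``area under the $v$--$t$ curve'' device used in the proof of Lemma \ref{Switching point}.

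Finally I would upgrade this to a genuine faster trajectory with the correct endpoints. Consider the one-parameter family of $u_{max}$-then-$u_{min}$ profiles ending at speed exactly $v^e_i$, parametrized by the acceleration-phase duration $\tau'$ (the deceleration duration $\sigma'$ then pinned by $u_{max}\tau'+u_{min}\sigma'=v^e_i-v^s_i$). Along this family both the elapsed time $\tau'+\sigma'$ and the distance covered are continuous and strictly increasing in $\tau'$; the interchanged trajectory is the member with $\tau'=\tau$, which over-covers $p^e_i-p^s_i$, while the degenerate member (pure $u_{max}$ if $v^e_i\ge v^s_i$, pure $u_{min}$ otherwise) under-covers it -- here Proposition \ref{proposition2} certifies, from feasibility, that $p^e_i-p^s_i$ is strictly between these two values. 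By the intermediate value theorem some member reaches $p^e_i$ exactly with speed $v^e_i$, and, having acceleration-phase duration smaller than $\tau$, does so in total time strictly less than $\tau+\sigma$, contradicting time-optimality. I expect this last paragraph to be the real obstacle: making ``continuous and strictly increasing in both time and distance'' and ``strictly bracketed between the two extremes'' airtight, and handling the limits where a phase vanishes, is where the routine-looking work actually lives. A shorter but less elementary alternative, if one accepts the maximum principle instead of the graphical method: with free terminal time the Hamiltonian $H=1+\lambda_p v+\lambda_v u$ vanishes identically along the optimum, so at any switch (where $\lambda_v=0$) one gets $v=-1/\lambda_p$; a $u_{min}$-then-$u_{max}$ switch forces $\lambda_v$ decreasing, i.e. $\lambda_p>0$, hence $v<0$ at the switch, contradicting $v\ge v_{min}>0$.
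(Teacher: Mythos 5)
Your argument is correct, but it reaches the conclusion by a genuinely different route than the paper. The paper's proof uses no interchange argument: it explicitly constructs the unique decelerate-then-accelerate trajectory and the unique accelerate-then-decelerate trajectory joining $[p^s_i,v^s_i]^T$ to $[p^e_i,v^e_i]^T$, computes their switching speeds $v^{c^\prime}_i$ and $v^{c}_i$ in closed form under the symmetry assumption $u_{min}=-a$, $u_{max}=a$, and then verifies $t^{e}_i<t^{e^\prime}_i$ through the long chain of algebraic equivalences in \eqref{lem1proof}, starting from the switching-point existence condition $|{v^e_i}^2-{v^s_i}^2|<2al_i$. You instead swap the order of the two control phases while keeping their durations, observe that the swapped speed--time graph strictly dominates the original (so it overshoots the target position at the same terminal speed and time), and then retract along the one-parameter family of $u_{max}$-then-$u_{min}$ profiles --- monotonicity of both elapsed time and distance in the acceleration duration, plus the intermediate value theorem, with Proposition \ref{proposition2} supplying the bracketing at the degenerate end --- to land exactly on $[p^e_i,v^e_i]^T$ in strictly less time. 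Your route costs an extra continuity/monotonicity step that the paper avoids by solving for both trajectories exactly, but it buys two things: it works verbatim for asymmetric bounds $u_{max}\neq -u_{min}$, which the paper's algebra does not, and it sidesteps the sign bookkeeping in \eqref{lem1proof}, where the square-root steps silently require the factors involved to be nonnegative. Your Pontryagin alternative ($H\equiv 0$ under free terminal time forces $v<0$ at any $u_{min}\!\to\!u_{max}$ switch, contradicting $v\geq v_{i,min}>0$) is also valid and is arguably the shortest correct argument. One caveat you share with the paper: neither proof checks that the constructed accelerate-then-decelerate trajectory respects $v_i(t)\leq v_{i,max}$, so both arguments really establish the claim for the case where the speed constraint stays inactive.
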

\begin{proof}
There are two cases to consider:

Case 1: CAV $i$ decelerates with $u_{min}$ and then accelerates with $u_{max}$.
The intermediate state $x_{i}^{c^\prime}=[p^{c^\prime}_i,v^{c^\prime}_i]^T$ for case 1 is found with using the dynamics of the model (\ref{27a}), and solving the system of equations below:
\begin{gather}\label{cprinesystem1}
 \left\{ \begin{array}{ll}
{v^{c^\prime}_i}^2-{v^s_i}^2=2u_{min}\cdot(p^{c^\prime}_i-p^s_i)\\
{v^e_i}^2-{v^{c^\prime}_i}^2=2u_{max}\cdot(p^e_i-p^{c^\prime}_i)
           \end{array} \right..
\end{gather}
To simplify notation, we denote $u_{min}=-a$ and $u_{max}=a$, hence
\begin{gather}\label{cprinesystem2}
 \left\{ \begin{array}{ll}
{v^{c^\prime}_i}^2-{v^s_i}^2=-2a(p^{c^\prime}_i-p^s_i)\\
{v^e_i}^2-{v^{c^\prime}_i}^2=2a(p^e_i-p^{c^\prime}_i)
           \end{array} \right..
\end{gather}
Solving (\ref{cprinesystem2}) yields
\begin{gather}\label{cprinesystem2sol1}
p^{c^\prime}_i=\frac{-({v^e_i}^2-{v^s_i}^2)+2a(p^s_i+p^e_i)}{4a}.
\end{gather}

\begin{gather}\label{cprinesystem2sol2}
v^{c^\prime}_i=\sqrt{{v^s_i}^2-2a(p_i^{c^\prime}-p_i^s)}.
\end{gather}
Plugging (\ref{cprinesystem2sol1}) in (\ref{cprinesystem2sol2}) we have
\begin{gather}\label{cprinesystem2sol3}
v^{c^\prime}_i=\sqrt{\frac{{v^s_i}^2+{v^e_i}^2-2a(p^e_i-p^s_i)}{2}}.
\end{gather}
The total travel time at zone $m$ is

\begin{gather}\label{cprinetime}
t^{e^\prime}_i=\frac{v^{c^\prime}_i -v^s_i}{-a}+\frac{v^e_i-v^{c^\prime}_i}{a},
\end{gather}
or in a simplified form
\begin{gather}\label{cprinetime2}
t^{e^\prime}_i=\frac{v^s_i+v^e_i-2v^{c^\prime}_i}{a}.
\end{gather}
Case 2: CAV $i$ accelerates first and then decelerates.
The intermediate state is
\eat{
The intermediate state $x_{i}^{c}=[p^{c}_i,v^{c}_i]^T$ is found with the using dynamics of the model (\ref{27a}) and solving the system of equations below:
\begin{gather}\label{csystem1}
 \left\{ \begin{array}{ll}
{v^{c}_i}^2-{v^s_i}^2=2u_{max}\cdot(p^{c}_i-p^s_i)\\
{v^e_i}^2-{v^{c}_i}^2=2u_{min}\cdot(p^e_i-p^{c}_i)
           \end{array} \right.,
\end{gather}
with the same assumption of $u_{min}=-a$ and $u_{max}=a$, it yields: 
\begin{gather}\label{csystem2}
 \left\{ \begin{array}{ll}
{v^{c}_i}^2-{v^s_i}^2=2a(p^{c}_i-p^s_i)\\
{v^e_i}^2-{v^{c}_i}^2=-2a(p^e_i-p^{c}_i)
           \end{array} \right.,
\end{gather}
solving the system of equations in (\ref{csystem2}) yields:}
\begin{gather}\label{csystem2sol1}
p^{c}_i=\frac{({v^e_i}^2-{v^s_i}^2)+2a(p^s_i+p^e_i)}{4a},
\end{gather}

\begin{gather}\label{csystem2sol2}
v^{c}_i=\sqrt{{v^s_i}^2+2a(p_i^{c}-p_i^s)}.
\end{gather}
Plugging (\ref{csystem2sol1}) in (\ref{csystem2sol2}) we have
\begin{gather}\label{csystem2sol3}
v^{c}_i=\sqrt{\frac{{v^s_i}^2+{v^e_i}^2+2a(p^e_i-p^s_i)}{2}}.
\end{gather}
The total travel time at zone $m$ is 

\begin{gather}\label{ctime}
t^{e}_i=\frac{v^{c}_i -v^s_i}{a}+\frac{v^e_i-v^{c}_i}{-a},
\end{gather}
or in a simplified form
\begin{gather}\label{ctime2}
t^{e}_i=\frac{-(v^s_i+v^e_i)+2v^{c}_i}{a}.
\end{gather}
If the switching point exists, the distance travelled  while $u_{max}$ (or $u_{min}$) is shorter than the total distance $p^e_i-p^s_i$. Let $l_i=p^e_i-p^s_i$. We have
\begingroup
\allowdisplaybreaks
\begin{align}
&|{v^e_i}^2-{v^s_i}^2|<2al_i \nonumber\\
\Longleftrightarrow & ({v^e_i}^2-{v^s_i}^2)^2<(2al_i)^2 \nonumber\\
\Longleftrightarrow &{v^e_i}^4+{v^s_i}^4-2{v^e_i}^2{v^s_i}^2<4a^2l_i^2 \nonumber\\
\Longleftrightarrow
 &{v^e_i}^4+{v^s_i}^4-4a^2l_i^2+2{v^e_i}^2{v^s_i}^2<4{v^e_i}^2{v^s_i}^2 \nonumber\\
\Longleftrightarrow &({v^e_i}^2+{v^s_i}^2)^2-(2al_i)^2<(2{v^e_i}{v^s_i})^2 \nonumber\\
\Longleftrightarrow &({v^e_i}^2+{v^s_i}^2-2al_i)({v^e_i}^2+{v^s_i}^2+2al_i)<(2{v^e_i}{v^s_i})^2 \nonumber\\
\Longleftrightarrow &\sqrt{({v^e_i}^2+{v^s_i}^2-2al_i)({v^e_i}^2+{v^s_i}^2+2al_i)}<2{v^e_i}{v^s_i} \nonumber\\
\Longleftrightarrow &2\sqrt{(\dfrac{{v^e_i}^2+{v^s_i}^2-2al_i}{2})(\dfrac{{v^e_i}^2+{v^s_i}^2+2al_i}{2})} \nonumber\\
&+{v^e_i}^2+{v^s_i}^2<{v^e_i}^2+{v^s_i}^2+2{v^e_i}{v^s_i} \nonumber\\
\Longleftrightarrow 
&2\sqrt{(\dfrac{{v^e_i}^2+{v^s_i}^2-2al_i}{2})(\dfrac{{v^e_i}^2+{v^s_i}^2+2al_i}{2})} \nonumber\\
&+(\dfrac{{v^e_i}^2+{v^s_i}^2-2al_i}{2})(\dfrac{{v^e_i}^2+{v^s_i}^2+2al_i}{2}) \nonumber\\
&<({v^e_i}+{v^s_i})^2 \nonumber\\
\Longleftrightarrow &(v^{c^\prime}_i+v^{c}_i)^2<({v^e_i}+{v^s_i})^2 \nonumber\\
\Longleftrightarrow &(v^{c^\prime}_i+v^{c}_i)<({v^e_i}+{v^s_i}) \nonumber\\
\Longleftrightarrow &2(v^{c^\prime}_i+v^{c}_i)<2({v^e_i}+{v^s_i}) \nonumber\\
\Longleftrightarrow &2v^{c}_i-({v^e_i}+{v^s_i})<-2v^{c^\prime}_i+({v^e_i}+{v^s_i}) \nonumber\\
\Longleftrightarrow
&\dfrac{2v^{c}_i-({v^e_i}+{v^s_i})}{a}<\dfrac{-2v^{c^\prime}_i+({v^e_i}+{v^s_i})}{a} \nonumber\\
\Longleftrightarrow & t^{e}_i<t^{e^\prime}_i \label{lem1proof}
\end{align}
\endgroup
Hence, the travel time for CAV $i$ at zone $m$ is shorter in case 2.

\end{proof}
\eat{
The Fig. \ref{fig:3} shows the result of the lemma \ref{lem1}
\begin{figure}
\includegraphics[width=8.5cm]{lemma1.png}
\caption{The profile of speed-time for a) acceleration-deceleration and b) deceleration-acceleration .}
\label{fig:3}%
\end{figure}}

\begin{lemma}\label{lemm2}
Let $x_{i}^{0}=[p^s_i,v^s_i]^T$ and $x_{i}^{e}=[p^e_i,v^e_i]^T$ be the initial state and final state of CAV $i\in\mathcal{N}(t)$ travelling in zone $m\in\mathcal{I}_i$. Let $x_{i}^{c}=[p^c_i,v^c_i]^T$ be the intermediate state at switching point. Then, the intermediate state is 
\begin{gather}\label{zz}
p^{c}_i=\frac{{v^e_i}^2-{v^s_i}^2+2(u_{max}p^s_i-u_{min}p^e_i)}{2(u_{max}-u_{min})},
\end{gather}

\begin{gather}\label{z}
v^{c}_i=\sqrt{{v^s_i}^2+2u_{max}\cdot(p_i^c-p_i^s)}.
\end{gather}

\end{lemma}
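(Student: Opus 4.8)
The plan is to invoke Theorem~\ref{lem1}, which guarantees that a CAV with a feasible time-optimal solution at zone $m$ first accelerates with $u=u_{max}$ and then decelerates with $u=u_{min}$, the switch occurring at the intermediate state $x_i^{c}=[p_i^{c},v_i^{c}]^T$. The argument then reduces to writing the constant-acceleration kinematic relation $v^2-v_0^2=2u\,(p-p_0)$, which follows from the double integrator \eqref{27a}, on each of the two arcs and eliminating ${v_i^{c}}^2$.

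First I would apply that identity to the acceleration arc from $x_i^{s}$ to $x_i^{c}$ with $u=u_{max}$, obtaining ${v_i^{c}}^2-{v_i^{s}}^2=2u_{max}\,(p_i^{c}-p_i^{s})$, which is exactly \eqref{z} (the positive square root being selected by the speed bound \eqref{vconstraint}). Then I would apply the same identity to the deceleration arc from $x_i^{c}$ to $x_i^{e}$ with $u=u_{min}$, obtaining ${v_i^{e}}^2-{v_i^{c}}^2=2u_{min}\,(p_i^{e}-p_i^{c})$. Adding these two relations cancels ${v_i^{c}}^2$ and leaves the single linear equation
\begin{gather}
{v_i^{e}}^2-{v_i^{s}}^2=2u_{max}\,(p_i^{c}-p_i^{s})+2u_{min}\,(p_i^{e}-p_i^{c}),
\end{gather}
and collecting the $p_i^{c}$ terms and dividing by $2(u_{max}-u_{min})$, which is nonzero since $u_{max}>u_{min}$, yields \eqref{zz}. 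Equation \eqref{z} is then simply the acceleration-arc relation already derived.

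There is essentially no serious obstacle here; the lemma is a bookkeeping consequence of Theorem~\ref{lem1}. The only points needing care are getting the order of the two arcs right (the reverse, decelerate-then-accelerate order would instead reproduce the primed quantities of \eqref{cprinesystem2sol1}--\eqref{cprinesystem2sol3}) and choosing the correct sign of the root for $v_i^{c}$. One could additionally verify, from the feasibility hypothesis (Proposition~\ref{remark2feas}) together with Lemma~\ref{Switching point}, that $p_i^{c}\in[p_i^{s},p_i^{e}]$ and that $v_i^{c}$ respects \eqref{vconstraint}, but this is not required for the stated identities.
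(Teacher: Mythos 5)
Your proposal is correct and follows exactly the paper's own argument: invoke Theorem~\ref{lem1} to fix the accelerate-then-decelerate structure, write the constant-acceleration relation ${v}^2-{v_0}^2=2u\,(p-p_0)$ on each arc, and solve the resulting system \eqref{z2} for $p_i^c$ and $v_i^c$. The only difference is that you spell out the elimination of ${v_i^c}^2$ and the sign choice of the root, which the paper leaves implicit.
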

\vspace{1mm}
\begin{proof}
From (\ref{27a}) and Theorem \ref{lem1}, the intermediate states can be found by solving the following system of equations 
\begin{gather}\label{z2}
 \left\{ \begin{array}{ll}
{v^c_i}^2-{v^s_i}^2=2u_{max}\cdot(p^c_i-p^s_i)\\
{v^e_i}^2-{v^c_i}^2=2u_{min}\cdot(p^e_i-p^c_i)
           \end{array}, \right. 
\end{gather}
that yields (\ref{zz}) and (\ref{z}).

\end{proof}

\begin{lemma}
Let $t_i^c$, and $t^e_i$ be the time at intermediate and final state for CAV $i\in \mathcal{N}(t)$ at zone $m\in\mathcal{I}_i$ respectively. Then $t_i^c$ and $t^e_i$ are 
\begin{gather}\label{tc}
t^c_i=\frac{v^c_i -v^s_i}{u_{max}},
\end{gather}
\begin{gather}\label{te}
t^e_i=\frac{v^c_i -v^s_i}{u_{max}}+\frac{v^e_i-v^c_i}{u_{min}}.
\end{gather}
\end{lemma}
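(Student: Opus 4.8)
The plan is to integrate the double-integrator dynamics \eqref{27a} piecewise, exploiting the structure of the time-optimal trajectory established in Theorem \ref{lem1}. By that theorem, a CAV $i$ with a feasible time-optimal solution at zone $m$ first accelerates with $u = u_{max}$ from the entry of the zone up to the switching point, and then decelerates with $u = u_{min}$ from the switching point until it exits the zone. Consistent with the notational convention of Section \Rmnum{3}, I would take the entry time of zone $m$ as the reference instant $0$, so that $t_i^c$ and $t_i^e$ denote the elapsed times within the zone at which CAV $i$ reaches the intermediate state $x_i^c$ and the final state $x_i^e$, respectively.

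First I would treat the acceleration phase. On $[0,t_i^c]$ we have $\dot v_i(t) = u_{max}$, hence $v_i(t) = v_i^s + u_{max}\,t$. Evaluating at the switching point, where $v_i(t_i^c) = v_i^c$ by the definition of the intermediate state (Lemma \ref{lemm2}), gives $v_i^c = v_i^s + u_{max}\,t_i^c$, and solving for $t_i^c$ yields \eqref{tc}. Next I would treat the deceleration phase. On $[t_i^c,t_i^e]$ we have $\dot v_i(t) = u_{min}$, so $v_i(t) = v_i^c + u_{min}\,(t - t_i^c)$. Evaluating at the exit, where $v_i(t_i^e) = v_i^e$, gives $v_i^e = v_i^c + u_{min}\,(t_i^e - t_i^c)$, i.e. $t_i^e - t_i^c = (v_i^e - v_i^c)/u_{min}$; substituting the value of $t_i^c$ from \eqref{tc} produces \eqref{te}.

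Since this is essentially a direct integration, there is no substantial obstacle; the only point worth a short remark is the consistency of signs. Because $u_{min} < 0$ and CAV $i$ decelerates on the second leg, both $v_i^e - v_i^c$ and $u_{min}$ are negative, so the second summand in \eqref{te} is a genuine positive duration, and $v_i^c \ge v_i^s$ guarantees $t_i^c \ge 0$. One could additionally note that the degenerate situations of Cases 1 and 2 of Lemma \ref{Switching point} — no genuine switch — are recovered by letting $v_i^c$ coincide with $v_i^s$ (pure deceleration, $t_i^c = 0$) or with $v_i^e$ (pure acceleration, $t_i^c = t_i^e$), so the formulas remain valid across all feasible cases.
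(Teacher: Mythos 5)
Your proof is correct and follows essentially the same route as the paper: both integrate the constant-control dynamics \eqref{27a} piecewise over the acceleration and deceleration phases and solve the resulting linear relations for $t_i^c$ and $t_i^e$. Your added remarks on sign consistency and the degenerate no-switch cases are sound but not needed beyond what the paper does.
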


\begin{proof}
The control input of CAV $i\in\mathcal{N}(t)$ at zone $m\in\mathcal{I}_i$ is consists of two parts, acceleration with ${u_{max}}$ and deceleration with ${u_{min}}$. Since the control input is constant, the total time for each part can be found by integrating (\ref{27a})  

\begin{gather}\label{lem3proof2}
\begin{array}{ll}
v^c_i -v^s_i=u_{max}\cdot t^c_i, & \forall t\in[0,t^c_i],\\
\\
v^e_i -v^c_i=u_{min}\cdot(t^e_i-t^c_i), & \forall t\in[t^c_i,t^e_i].
\end{array}
\end{gather}
Solving (\ref{lem3proof2}) for $t^e_i$ and $t^e_i$ yields (\ref{tc}) and (\ref{te}).

\end{proof}
\begin{theorem}
For CAV $i\in\mathcal{N}(t)$ at zone $m\in\mathcal{I}_i$, the process time $P_i^m$ is
\begin{equation}\label{theorem1}
 \left\{ \begin{array}{ll}
 P_{i}^m=t_i^e\quad m\in \mathcal{M}_1\\
 \\
 P_{i}^m=\dfrac{p_i^e-p_i^s}{v_z}\quad m\in\mathcal{M}_2\
           \end{array},\right.
           \end{equation}
where $v_z$ is the constant speed inside the merging zones $m\in\mathcal{M}_2$.
\end{theorem}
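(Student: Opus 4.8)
The plan is to prove the two branches of \eqref{theorem1} separately, since each is essentially an assembly of results already established. For $m\in\mathcal{M}_1$ I would begin from Definition \ref{processtime}, which identifies $P_i^m$ with the optimal value $t_i^{e,m}-t_i^{s,m}$ of Problem \ref{problem1}. Assuming a feasible time-optimal solution exists at zone $m$ (Proposition \ref{remark2feas}), Lemma \ref{Switching point} restricts attention to control profiles with at most one switching point, and Theorem \ref{lem1} fixes their order: accelerate with $u_{max}$ to the switching state and then decelerate with $u_{min}$ to $x_i^e$. Lemma \ref{lemm2} then supplies the switching state $x_i^c=[p_i^c,v_i^c]^T$ in closed form, and the ensuing lemma evaluates the traversal time as \eqref{te}. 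Working in the relative time frame $t_i^{s,m}=0$ used throughout Section III, this yields $P_i^m=t_i^e$.

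For $m\in\mathcal{M}_2$ I would invoke Assumption 2 directly: inside a merging zone the speed equals the known constant $v_z$, so integrating \eqref{27a} gives $p_i^e-p_i^s=v_z\,(t_i^{e,m}-t_i^{s,m})$, whence $P_i^m=(p_i^e-p_i^s)/v_z$. Because the speed is prescribed there is nothing to minimize, so this single feasible value is trivially the shortest, which matches Definition \ref{processtime}. Combining the two cases gives \eqref{theorem1}.

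The one point I would treat with care --- and the closest thing to an obstacle --- is justifying that the $t_i^e$ produced by Lemma \ref{lemm2} together with \eqref{te} is the \emph{minimum} traversal time rather than merely a feasible one. This is precisely what Lemma \ref{Switching point} and Theorem \ref{lem1} establish (the reversed order deceleration-then-acceleration yields $t_i^{e'}\ge t_i^e$, and any profile with more than one switch is strictly slower), so the remaining work is mostly to cite them in the right sequence. I would also note that the two degenerate subcases of Lemma \ref{Switching point}, where $v_i^e$ attains one of the bounds in \eqref{vforproposition2} and the switching point collapses to an endpoint, are still covered by \eqref{te} with $t_i^c=0$ or $t_i^c=t_i^e$, so no separate formula is needed.
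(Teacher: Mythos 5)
Your proposal is correct and follows essentially the same route as the paper: for $m\in\mathcal{M}_1$ the process time is read off from the time-optimal solution culminating in \eqref{te}, and for $m\in\mathcal{M}_2$ it follows immediately from the constant imposed speed $v_z$. The paper's own proof is simply a terser version of your argument; your extra remarks on the optimality chain and the degenerate no-switching cases are sound elaborations rather than a different approach.
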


\begin{proof}
 For $m\in \mathcal{M}_1$ the shortest feasible time that CAV $i\in \mathcal{N}(t)$ can travel through the zone $m$ with known initial and final state is found from the solution of the Problem \ref{problem1}, which is derived in equation \eqref{te}. 
 
 CAV $i$ in any zone $m\in \mathcal{M}_2$, is assumed to travel with the constant and imposed speed $v_z$, the shortest time that it takes to cross zone $m$ is simply found from division of length of the zone by the speed.
\end{proof}{}

\vspace{1mm}
\eat{ CAV $j\in N(t)$ solves the time minimization problem for each segment $m\in M$ in its path and then it will computed the minimum process time$P^{j}_m$ as : 
\begin{gather}
P^{j}_m=\frac{2v_i -(v^s+v^f)}{a}
\end{gather}}
\begin{remark}
If CAV $i\in\mathcal{N}(t)$ has a feasible time-optimal solution at zone $m\in\mathcal{I}_i$ (Proposition \ref{remark2feas}), the  real time feedback control is
\begin{gather}\label{e2aq2}
 u_i(t)=\left\{ \begin{array}{ll}
u_{max}& \emph{if}\quad T_i^m\leq t<T_i^m+t^c_i\\
u_{min} & \emph{if}\quad T_i^m+t^c_i\leq t\leq T_i^m+ t^e_i\\
           \end{array}. \right. 
\end{gather}
\end{remark}
Substituting (\ref{e2aq2}) in (\ref{27a}), we can find the optimal position and speed for each CAV $i\in\mathcal{N}(t)$ at zone $m\in\mathcal{I}_i$, namely 
\begin{gather}\label{vstar}
 v_i^{\ast}(t)=u_it+b_i,
\end{gather}
\begin{gather}\label{pstar}
p_i^{\ast}(t)=\frac{1}{2}u_it^2+b_it+c_i,
\end{gather}
where \(b_i,c_i\) are integration constants, which can be computed by using initial and final conditions in (\ref{2aaq}). In particularly, for $t\in[T_i^m, T_i^m+t^c_i)$, using (\ref{vstar}) with the initial condition $v_i(T_i^m)=v^s_i$, (\ref{pstar}) with the initial condition $p_i(T_i^m)=p^s_i$, a system of equations can be formulated in the form of \(\textbf{E}_i \textbf{b}_i=\textbf{q}_i\):
\begin{gather}
  \begin{bmatrix}
T_i^m & 1\\
1 & 0
   \end{bmatrix}
   \cdot
   \begin{bmatrix}
   b_i\\
   c_i
   \end{bmatrix}
   =   \begin{bmatrix}
   p^s_i-\frac{1}{2}u_{max}\cdot (T_i^m)^2\\
    v^s_i-u_{max}\cdot T_i^m
   \end{bmatrix}.
\end{gather}

However, for $t\in[T_i^m+t^c_i, T_i^m+t^e_i]$, using (\ref{vstar}) with the final condition $v_i( T_i^m+t^e_i)=v^e_i$, (\ref{pstar}) with the final condition $p_i( T_i^m+t^e_i)=p^e_i$, a system of equations will be as following:

\begin{gather}
  \begin{bmatrix}
T_i^m+t^e_i & 1\\
1 & 0
   \end{bmatrix}\cdot
   \begin{bmatrix}
   b_i\\
   c_i
   \end{bmatrix}
   =   \begin{bmatrix}
   p^e_i-\frac{1}{2}u_{min}\cdot(T_i^m+t^e_i)^2\\
    v^e_i-u_{min}\cdot (T_i^m+t^e_i)
   \end{bmatrix}.
\end{gather}

\subsection{Solution of the Energy minimization problem }

The derivation solution of the analytical, closed-form solution of Problem \ref{problem3} has been presented \cite{Malikopoulos2017,Malikopoulos2018c}.
If none of the contraints are active the optimal control input, speed, and position of each CAV $i\in\mathcal{N}(t)$ are
\begin{gather}\label{27}
 u_i^{\ast}=a_it+b_i,\\
 v_i^{\ast}=\frac{1}{2}a_it^2+b_it+c_i,\\
p_i^{\ast}=\frac{1}{6}a_it^3+\frac{1}{2}b_it^2+c_it+d_i,
\end{gather}
In above equations \(a_i,b_i,c_i,d_i\) are integration constants, which can be found by plugging initial state at zone $m\in\mathcal{I}_i$ $p_i(T_i^{m})=p^s_i, v_i(T_i^{m})=v^s_i$ and final states $p_i(T_i^{m^\prime})=p^e_i, v_i(T_i^{m^\prime})=v^e_i$ in equations.
Thus, a system of equations can be formed in form of \(\textbf{T}_i \textbf{b}_i=\textbf{q}_i\)

\begin{gather}\label{energy-optimal-matrix}
  \begin{bmatrix}
   \frac{1}{6}(T_i^{m})^3&
   \frac{1}{2}(T_i^{m})^2&
   (T_i^{m})&
   1\\
   \\
   \frac{1}{2}(T_i^{m})^2&
   (T_i^{m})&
   1&
   0\\
   \\
   \dfrac{1}{6}(T_i^{m^\prime})^3&
   \dfrac{1}{2}(T_i^{m^\prime})^2&
   (T_i^{m^\prime})&
   1\\
   \\
   \frac{1}{2}(T_i^{m^\prime})^2&
   (T_i^{m^\prime})&
   1&
   0\\
   \end{bmatrix}
   \cdot
     \begin{bmatrix}
   a_i\\
   b_i\\
   c_i\\
   d_i
   \end{bmatrix}
   =
     \begin{bmatrix}
   p_i^s\\
   v_i^s\\
   p^e_i\\
   v^e_i
   \end{bmatrix}
\end{gather}
Note that since (\ref{energy-optimal-matrix}) can be computed
online, the controller may re-evaluate the four constants at any time $t\in[T_i^{m},T_i^{m^\prime}]$ and update (\ref{27}) accordingly.
\section{Simulation}
To evaluate the effectiveness of the proposed framework, we considered the coordination of $16$ CAVs in simulation. We considered two symmetrical intersections, where the length of each road connecting to the intersections is $400$ $m$, and the length of the merging zones are $30$ $m$. The minimum time headway is $1.5$~$s$.
The maximum acceleration limit is set to $3$ $m/s^2$ and the minimum deceleration limit is $-3$ $m/s^2$. The speed at merging zones is set $15$ $m/s$. 
The CAVs enter the control zones from four different paths in order (shown in Fig. \ref{fig:2}) at random time with uniform distribution between $0$ and $20$ $s$. Initial time were checked to not violate the safety constrained (\ref{Schedulecons2}) at entrance.

The speed profile for the first $4$ CAVs is shown in  Fig. \ref{fig:5}. CAVs $1-4$ enter the control zone at random time, from path $1$ to path $4$ respectively. It can be seen in Fig. \ref{fig:5} that CAV $1$, CAV $2$ and  CAV $4$ are following time-optimal trajectory on each zone of their path. They accelerate  in each zone $m\in\mathcal{M}_1$  until certain time (switching point) and then decelerate to reach the next zone.
However, CAV $3$ has a quadratic form at the beginning of its path. The quadratic form of velocity profile results from solving the energy-minimization problem implying that the time-optimal solution was not feasible for its first zone based on the schedule tuple (see the Table \ref{tab:experimentalResults}). Therefore, acceleration/deceleration is found through solving an energy-optimal control problem. 
Moreover, the relative position of CAV $2$ and CAV $3$ with the conflict tuple $\mathcal{C}_{3,2}=[1,11,2,12]$ is shown in Fig. \ref{fig:6} for zones which rear-end collision may occur. 
\begin{table}[ht]\label{table}
	\centering 
	\caption{Schedules and release time of first two zones of CAV 2 and CAV 3.}
	\begin{tabular}{lccc} \label{tab:experimentalResults}
		CAV Index & First Zone & $R_i^3$(s) & $T_i^3$(s) \\ \hline
		\vspace{0.25mm}
		2&14&14.54&14.54\\
		3&10 &15.04 &16.04
	\end{tabular}
\end{table}

\begin{figure}
\includegraphics[width=8.5cm]{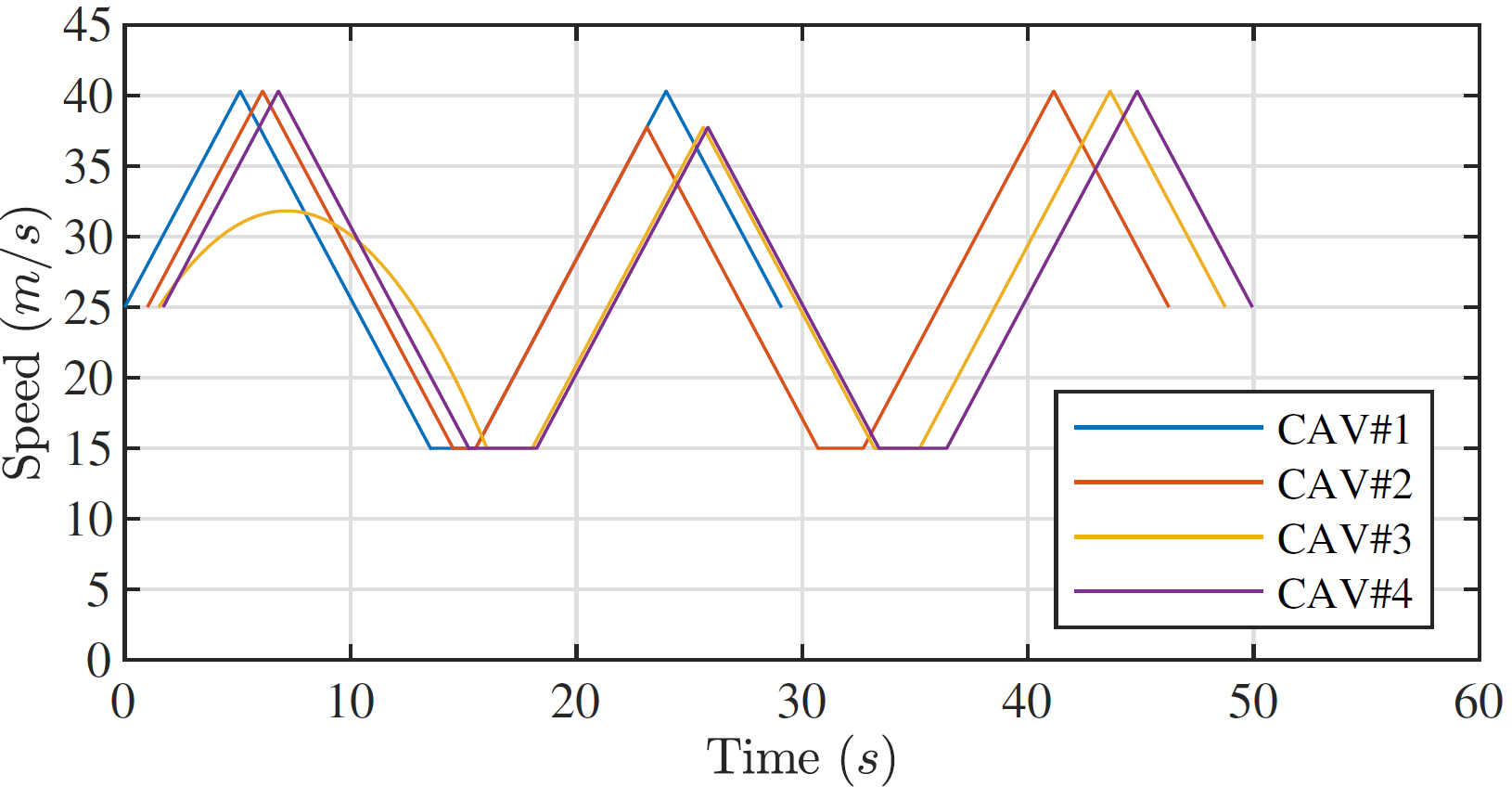}
\caption{Speed profile for the first 4 CAVs.}
\label{fig:5}%
\end{figure}
\begin{figure}
\includegraphics[width=8.5cm]{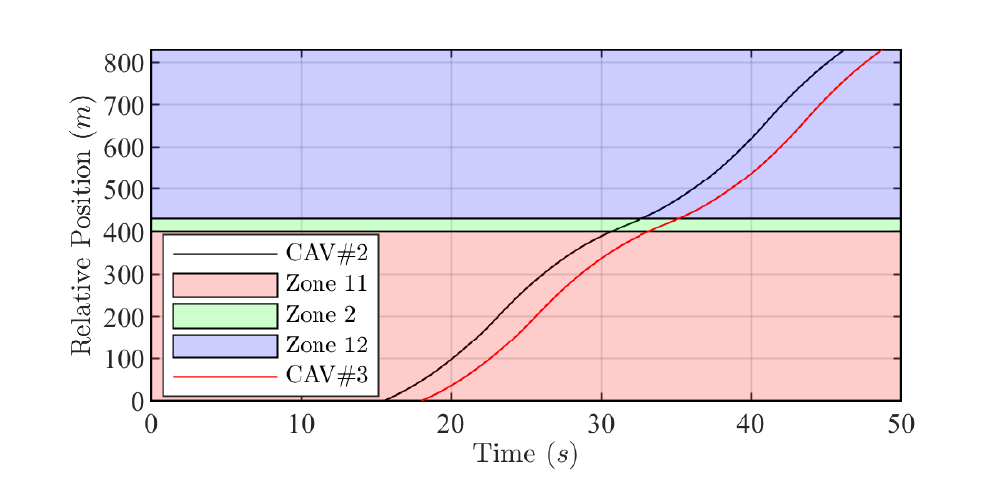}
\caption{Relative Position of CAV\#2 and CAV\#3 at zones which rear-end collision may occur.}
\label{fig:6}%
\end{figure}

\section{Concluding Remarks and future}
In this paper, we proposed a decentralized optimal control framework for CAVs for two interconnected intersections using scheduling theory. We formulated the control problem and provided a solution that can be implemented in real time. Upon entering the control zones, CAV solves a scheduling problem; however, if there are multiple CAVs entering the control zone at the same time, the scheduling problem is solved sequentially with their order in the queue.  The solution yields the optimal acceleration/deceleration of each CAV under the safety constraint at conflict zones.  Our objective is to minimize the travel time for each CAV on its path. If there is no such feasible solution, then each CAV solves an energy optimal control problem. 
There two potential directions for the future research: (1) to involve fully constrained energy-optimal and time-optimal problems as well as to investigate safety constraints on each zone, and explore when a safety constraint becomes active; and (2) to address uncertainty in data communication and control process. %
\bibliographystyle{IEEEtran}
\bibliography{ref}

\end{document}